\begin{document}

\bibliographystyle{plain}
\title{
$10$-vertex graphs with cyclic automorphism group of order $4$}

\author{
Peteris\ Daugulis\thanks{Department of Mathematics, Daugavpils
University, Daugavpils, LV-5400, Latvia (peteris.daugulis@du.lv).
} }

\pagestyle{myheadings} \markboth{P.\ Daugulis}{$10$-vertex graphs
with cyclic automorphism group of order $4$} \maketitle

\begin{abstract} We describe computational results about undirected graphs having $10$ vertices and automorphism
group isomorphic to $\mathbb{Z}/4\mathbb{Z}$.

\end{abstract}


\section{Introduction}\

This paper deals with a special case of the problem of finding
undirected graphs $\Gamma$ having a given automorphism group $G$
and minimal number of vertices. All graphs in this paper are
undirected.

Studies of graph automorphism groups may be motivated by the fact
that an isomorphism $G\rightarrow Aut(\Gamma)$ is a representation
of abstract groups as automorphisms of symmetric binary relations
in sets which can be considered as the next complexity level of
discrete objects after sets. Studies of automorphism groups of
graphs started in 1930s with the results of Frucht \cite{F} who
proved constructively in the late 1930s that finite graphs
universally represent finite groups: for any finite group $G$
there is a finite graph $\Gamma=(V,E)$ such that
$Aut(\Gamma)\simeq G$. In the 1970s it was proved by Babai
\cite{B1} constructively that for any finite group $G$ there is a
graph $\Gamma$ such that $Aut(\Gamma)\simeq G$ and $|V(\Gamma)|\le
2|G|$ if $G$ is not cyclic of order $3,4$ or $5$. An estimate
$|V(\Gamma)|\le 3|G|$ and a construction in the three exceptional
cases  was obtained by Sabidussi \cite{S}. Examples of graphs with
$3n$ vertices and cyclic automorphism group
$\mathbb{Z}/n\mathbb{Z}$ are widely known since 1960s, see
\cite{HP}. We can mention that there are $4$ isomorphism types of
graphs with $9$ vertices which form $2$ isomorphism types up to
complementarity. See Babai \cite{B2} for a comprehensive
exposition of this area.

It has been mentioned in the literature that $10$-vertex graphs
with cyclic automorphism group of order $4$ do exist, see
\cite{B1}, \cite{A}. There is an exercise in \cite{H} referring to
such graphs. Our goal is to summarize computational results
related to this problem and popularize results of Meriwether and
Arlinghaus \cite{A}.

We use standard notations of graph theory, see Diestel \cite{D}.
For a graph $\Gamma=(V,E)$ the subgraph induced by $X\subseteq V$
is denoted by $\Gamma[X]$.
\newpage

\section{Main computational results}\

Denote by $F$ the set of isomorphism classes of graphs
$\Gamma=(V,E)$ such that $|V|=10$ and $Aut(\Gamma)\simeq
\mathbb{Z}/4\mathbb{Z}$.

\begin{proposition} Let $\Gamma\in F$.

\begin{enumerate}

\item $|F|$=12. Elements of $F$ form $6$ isomorphism classes up to
complementarity.

\item $18 \le |E(\Gamma)|\le 27$.

\item $3\le \delta(\Gamma) \le 5$,  $4\le \Delta(\Gamma) \le 6$
(minimal and maximal degree).

\item $\Gamma$ has $3$ $Aut(\Gamma)$-orbits with $4$, $4$ and $2$
vertices.

\item $girth(\Gamma)=3$.

\item $3\le \omega(\Gamma) \le 4$ (clique number).

\item $core(\Gamma)$ is isomorphic either to $K_{3}$, $K_{4}$ or
$\Gamma$.

\item $3\le\kappa(\Gamma)\le 5$, $\kappa(\Gamma)=\lambda(\Gamma)$
(vertex and edge connectivity)

\item $2\le diam(\Gamma)\le 3$, $rad(\Gamma)=2$.

\item $3\le \chi(\Gamma)\le 4$ (chromatic number).

\item $F$ contains one planar graph.

\item $F$ contains one Eulerian graph.

\item $\Gamma$ is Hamiltonian.

\item $\Gamma$ is not vertex, edge or distance transitive.


\end{enumerate}

\end{proposition}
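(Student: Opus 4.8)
The overall strategy is exhaustive finite computation: the entire statement is a catalogue of invariants of a finite, explicitly describable family $F$, so the plan is first to construct $F$ and then to read off or derive each item. To build $F$ I would generate every graph on $10$ vertices up to isomorphism (there are $12{,}005{,}168$ of them, well within reach of \texttt{nauty}'s \texttt{geng}), compute the automorphism group of each, and retain exactly those $\Gamma$ with $Aut(\Gamma)\simeq\mathbb{Z}/4\mathbb{Z}$. A leaner alternative, which also serves as an independent check, exploits the defining symmetry directly: any such $\Gamma$ is invariant under some order-$4$ permutation $\sigma\in S_{10}$, and up to conjugacy there are only finitely many cycle types for $\sigma$ (namely $4+4+2$, $4+4+1+1$, $4+2+2+2$, $4+2+2+1+1$, $4+2+1+1+1+1$, $4+1+\cdots+1$). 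For each type the $\langle\sigma\rangle$-invariant graphs are the unions of $\langle\sigma\rangle$-orbits on the $45$ vertex-pairs; a Burnside count gives only about a dozen such orbits per type (e.g. $13$ orbits for type $4+4+2$), so each search space has size at most $2^{13}$ and is trivially enumerable. Either route yields the claim $|F|=12$, and pairing each graph with its complement collapses these into $6$ classes, giving item~1.

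With the twelve graphs in hand, items 2, 3, 5, 6, 7, 8, 9, and 10 are obtained by running standard routines on each member and taking minima and maxima: counting edges; tabulating degree sequences for $\delta,\Delta$; exhibiting a triangle for $girth=3$; computing $\omega$, the core, vertex and edge connectivity, diameter, radius, and chromatic number. Items 11, 12, and 13 are single per-graph tests (planarity, all degrees even, existence of a Hamiltonian cycle), whose positive instances are exhibited and whose failures are certified for the remaining members. Item~4 is likewise a direct computation: for each $\Gamma$ one verifies that $Aut(\Gamma)$ partitions $V$ into orbits of sizes $4,4,2$. I emphasise that this is a genuine empirical output, since faithfulness of a $\mathbb{Z}/4\mathbb{Z}$-action on $10$ points would a priori also permit orbit patterns such as $4+4+1+1$ or $4+2+2+2$.

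The final item~14 is the one part I would prove by pure reasoning rather than enumeration, using item~4 as input. Since $\Gamma$ is connected (item~8 gives $\kappa\ge3$), it has no isolated vertices; if $Aut(\Gamma)$ acted transitively on edges then, fixing one edge $\{a,b\}$ and mapping it onto every other edge, every vertex would lie in the orbit of $a$ or of $b$, forcing at most two vertex-orbits. But item~4 exhibits three orbits, so $\Gamma$ is not edge-transitive; the same three-orbit structure immediately rules out vertex-transitivity, and distance-transitivity is excluded because it would entail vertex-transitivity. The main obstacle in the whole argument is not any single deduction but the reliability of the enumeration: one must be sure the generation of $10$-vertex graphs is complete and, crucially, that the automorphism group of each retained graph is \emph{exactly} $\mathbb{Z}/4\mathbb{Z}$ and not a proper supergroup---most $\langle\sigma\rangle$-invariant graphs have strictly larger symmetry and must be discarded, and graphs invariant under several inequivalent $\sigma$ must not be double-counted. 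Cross-validating the two independent enumeration routes above is the cleanest way to discharge this concern.
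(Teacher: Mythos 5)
Your proposal is correct and is essentially the same approach as the paper's: the paper's entire proof reads ``All statements are proved by direct computation,'' carried out (per the acknowledgement) with MAGMA and Brendan McKay's \texttt{nauty} graph lists---precisely the exhaustive-enumeration-plus-invariant-checking strategy you describe, including the concern about discarding graphs whose automorphism group properly contains $\mathbb{Z}/4\mathbb{Z}$. Your deduction of item 14 from item 4 (three vertex orbits rule out vertex-, edge-, and hence distance-transitivity) is a sound theoretical shortcut, but it does not change the essential computational character of the argument.
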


\begin{proof} All statement are proved by direct computation.
\end{proof}

\paragraph{Cases}\

We describe two elements of $F$.

\subparagraph{The planar graph}\

 The only planar graph
$\Gamma_{1}\in F$ is shown in Fig.1. It can be thought as embedded
in the $3D$ space, a plane embedding is not given.
$Aut(\Gamma_{1})$ is generated by the vertex permutation
$g=(1,2,3,4)(5,6,7,8)(9,10).$

Subgraphs $\Gamma_{1}[1,2,3,4,5,7,9]$ and
$\Gamma_{1}[1,2,3,4,6,8,10]$ which can be thought as being drawn
above and below the orbit $\Gamma_{1}[1,2,3,4]$ are interchanged
by $g$. The core of $\Gamma_{1}$ is $K_{3}$. The characteristic
polynomial of $\Gamma_{1}$ is
$(x^2-2)^2(x^3-2x^2-8x-4)(x^3+2x^2-4x-4)$.
$$
\xymatrix@R=0.3pc@C=0.3pc{
&&&5\ar@{-}[rrrrd]\ar@{-}[llldddddd]\ar@{-}[rrrdddd]&&&&&&&&&&&\\
&&&&&&&9\ar@{-}[lddd]\ar@{-}[rrrrd]\ar@{-}[rddddddd]&&&&&&&\\
&&&&&&&&&&&7\ar@{-}[rrrdddd]\ar@{-}[llldddddd]&&&\\
&&&&&&&&&&&&&&\\
&&&&&&4\ar@{-}[lllllldd]\ar@{-}[rrrrddddd]\ar@{-}[rrrrrrrrdd]&&&&&&&&\\
&&&&&&&&&&&&&&\\
1\ar@{-}[rrrrrrrrdd]\ar@{-}[rrrrrrrdddd]\ar@{-}[rrrrddddd]&&&&&&&&&&&&&&3\ar@{-}[lllllldd]\ar@{-}[llllddd]\ar@{-}[llllllldddd]\\
&&&&&&&&&&&&&&\\
&&&&&&&&2\ar@{-}[llllddd]&&&&&&\\
&&&&&&&&&&8\ar@{-}[llld]&&&&\\
&&&&&&&10\ar@{-}[llld]&&&&&&&\\
&&&&6&&&&&&&&&&\\
}
$$
\begin{center}

Fig.1.  - $\Gamma_{1}$ the planar graph in $F$.
    \end{center}

\subparagraph{The graph with minimal number of edges}\

 The graph
$\Gamma_{2}\in F$ with minimal number of edges ($18$ edges) is
shown in Fig.2. $Aut(\Gamma_{2})$ is generated by the vertex
permutation $g=(1,2,3,4)(5,6,7,8)(9,10)$. $\Gamma_{2}$ is a core.

$$
\xymatrix@R=0.3pc@C=0.3pc{
&&&5\ar@{-}[rrrrd]\ar@{-}[llldddddd]\ar@{-}[rrrdddd]&&&&&&&&&&&\\
&&&&&&&9\ar@{-}[lllllllddddd]\ar@{-}[rrrrd]\ar@{-}[rrrrrrrddddd]&&&&&&&\\
&&&&&&&&&&&7\ar@{-}[rrrdddd]\ar@{-}[llldddddd]&&&\\
&&&&&&&&&&&&&&\\
&&&&&&4\ar@{-}[rdddddd]\ar@{-}[rrrrddddd]\ar@{-}[rrdddd]&&&&&&&&\\
&&&&&&&&&&&&&&\\
1\ar@{-}[rrrrddddd]\ar@{-}[rrrrrrrrrrrrrr]&&&&&&&&&&&&&&3\ar@{-}[llllddd]\\
&&&&&&&&&&&&&&\\
&&&&&&&&2\ar@{-}[llllddd]\ar@{-}[ldd]&&&&&&\\
&&&&&&&&&&8\ar@{-}[llld]&&&&\\
&&&&&&&10\ar@{-}[llld]&&&&&&&\\
&&&&6&&&&&&&&&&\\
}
$$

\begin{center}

Fig.2.  - $\Gamma_{2}$ - the graph in $F$ with minimal number of
edges.
    \end{center}

\subparagraph{Other graphs}\

 All other graphs in $F$ can be
obtained starting from $\Gamma_{1}$ or $\Gamma_{2}$ and adding or
removing edges in $\Gamma_{2}[1,2,3,4]$, the edge $(9,10)$ and
edges in $\Gamma_{2}[5,6,7,8]$.

\section*{Acknowledgement} Computations were performed using the
computational algebra system MAGMA, see \cite{B3}, graph lists and
the program $nauty$ made public by Brendan McKay, available at
$http://cs.anu.edu.au/~bdm/data/$.


\end{document}